\documentclass[11]{amsart}

\usepackage{amsfonts}
\usepackage{amscd}
\usepackage{amsmath, mathrsfs, amssymb}
\usepackage{amsthm}
\usepackage{setspace}
\usepackage{hyperref}
\usepackage{color}
\usepackage{epsfig}
\usepackage{here}
\usepackage{graphicx}
\usepackage[all]{xy}
\usepackage{psfrag}
\usepackage{graphicx,transparent}
\usepackage{enumerate}
\usepackage{caption}

\theoremstyle{plain}
\newtheorem{theorem}{Theorem}[section]

\newtheorem{proposition}[theorem]{Proposition}
\newtheorem{corollary}[theorem]{Corollary}

\theoremstyle{definition}
\newtheorem{remark}[theorem]{Remark}

\newcommand{\MM}{\mathcal M}

\newcommand{\BM}{\overline{\mathcal M}}

\newcommand{\PP}{\mathcal P}

\newcommand{\OO}{\mathcal O}

\newcommand{\BPP}{\overline{\mathcal P}}

\newcommand{\even}{\operatorname{even}}
\newcommand{\odd}{\operatorname{odd}}

\newcommand{\nonhyp}{\operatorname{nonhyp}}

\newcommand{\irr}{\operatorname{irr}}

\newcommand{\reg}{\operatorname{reg}}

\newcommand{\bbC}{\mathbb C}

\newcommand{\Lin}{\operatorname{Lin}}
\newcommand{\Nfold}{\operatorname{Nfold}}

\newcommand{\bbQ}{\mathbb Q}

\newcommand{\bbZ}{\mathbb Z}

\begin{document}

\title[]{Nonvarying, affine, and extremal geometry of strata of differentials}

\date{\today}

\author{Dawei Chen}

 \thanks{Research partially supported by National Science Foundation Grant DMS-2001040
        and Simons Foundation Collaboration Grant 635235.}
\address{Department of Mathematics, Boston College, Chestnut Hill, MA 02467, USA}
\email{dawei.chen@bc.edu}

\begin{abstract}
We prove that the nonvarying strata of abelian and quadratic differentials in \cite{ChenMoeller1, ChenMoeller2} have trivial tautological rings and are affine varieties. We also prove that strata of $k$-differentials of infinite area are affine varieties for all $k$. Vanishing of homology in degree higher than the complex dimension follows as a consequence for these affine strata. Moreover we prove that the stratification of the Hodge bundle for abelian and quadratic differentials of finite area is extremal in the sense that merging two zeros in each stratum leads to an extremal effective divisor in the boundary.  A common feature throughout these results is a relation of divisor classes in strata of differentials as well as its incarnation in Teichm\"uller dynamics.  
\end{abstract}

\maketitle

\setcounter{tocdepth}{1}
\tableofcontents

%%%%%%%%%%%%%%%%%
\section{Introduction}
\label{sec:intro}
%%%%%%%%%%%%%%%%%

For $\mu = (m_1, \ldots, m_n)$ with $\sum_{i=1}^n m_i  = k( 2g-2)$, let $\PP_g^k(\mu)$ be the projectivized stratum of $k$-differentials $\omega$ on smooth and connected complex curves of genus $g$ that have labeled zeros and poles whose orders are specified by $\mu$. Equivalently, $\PP_g^k(\mu)$ parameterizes $k$-canonical divisors of prescribed type $\mu$. 

The study of differentials is important in surface dynamics and moduli theory. We refer to \cite{Zorich, Wright, ChenSurvey} for an introduction to this subject. 
Despite various known results, the global geometry of $\PP_g^k(\mu)$ remains quite mysterious, e.g., the full structure of the tautological ring of $\PP_g^k(\mu)$ is largely unknown. Moreover, it is generally unclear whether $\PP_g^k(\mu)$ is an affine variety. The birational geometry of $\PP_g^k(\mu)$ is also less studied. 
Such questions are meaningful for understanding a moduli space, e.g., many exciting ideas and results have been discovered in the study of these questions for the moduli space $\MM_g$ of curves of genus $g$.  

There has been some expectation among experts that the (rational) Picard group of $\PP_g^k(\mu)$ should behave similarly comparing to that of $\MM_g$, which is of rank one (for $g\geq 3$) generated by the tautological divisor class $\lambda$. However, our result below shows that such an expectation should at least exclude the strata whose Teichm\"uller curves have nonvarying sums of Lyapunov exponents (as listed in \cite{ChenMoeller1, ChenMoeller2} for about $30$ of them).  For simplicity we call them nonvarying strata. Note that a stratum can be disconnected due to spin and hyperelliptic structures (\cite{KontsevichZorich, Lanneau}). Since different connected components can have distinct properties, when speaking of a stratum we mean a specific connected component according to the hyperelliptic and spin parity labelings. We say that the tautological ring of a stratum is trivial if it is isomorphic to $\bbQ$ generated by the fundamental class, i.e., any tautological class of positive codimension is zero over $\bbQ$.   

\begin{theorem}
\label{thm:nonvarying}
The nonvarying strata of abelian and quadratic differentials listed in \cite{ChenMoeller1, ChenMoeller2} have trivial tautological rings and are affine varieties.  
\end{theorem}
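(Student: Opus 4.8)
The plan is to deduce both statements from a single relation of divisor classes on a smooth modular compactification $\BPP$ of the stratum --- for instance the space of multi-scale differentials --- working on one connected component at a time so as to respect the spin and hyperelliptic labelings. Let $\xi=c_1(\mathcal O(1))$ be the tautological class, where $\mathcal O(1)$ is the dual of the tautological subbundle (its sections are the differentials, and it is relatively ample over $\BMg$), let $\lambda$ be the pulled-back Hodge class, and let $D_\Gamma$ be the boundary divisors indexed by two-level graphs $\Gamma$. The first observation is that on the open stratum the tautological ring is controlled by $\xi$ alone: the universal $k$-differential is a section of $\omega_{\mathcal C/\PP}^{\otimes k}\otimes\pi^*\mathcal O(1)$ with divisor $\sum_i m_i\sigma_i$, so on the universal curve over the interior one has $k\,\psi=-\xi+\sum_i m_i[\sigma_i]$ for the relative class $\psi=c_1(\omega_{\mathcal C/\PP})$, and restricting to the $i$-th zero section $\sigma_i$ gives the intrinsic relations $(m_i+k)\,\psi_i=-\xi$. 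Consequently, once $\xi$ restricts to zero on the interior, every $\psi_i$, every $\kappa_j=\pi_*(\psi^{\,j+1})$, and (through Mumford's formula) $\lambda$ all vanish there, so the tautological ring collapses to $\bbQ$.

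The heart of the matter is therefore to produce the relation forcing $\xi$ to be an effective boundary class. Here the dynamics enters: restricting to a Teichm\"uller curve $\overline C\subset\BPP$, the class $\deg\xi|_{\overline C}$ computes $-\tfrac12\chi(\overline C)$ and the sum of Lyapunov exponents equals $\lambda_1+\cdots+\lambda_g=\deg\lambda|_{\overline C}/\deg\xi|_{\overline C}$, while the Eskin--Kontsevich--Zorich formula evaluates this sum as $\tfrac1{12}\sum_i\frac{m_i(m_i+2)}{m_i+1}$ plus a Siegel--Veech term that is itself a fixed combination of the $\deg D_\Gamma|_{\overline C}$. For the nonvarying strata of \cite{ChenMoeller1, ChenMoeller2} the sum is a constant $\kappa(\mu)$ independent of $\overline C$, and clearing denominators gives $\deg\lambda|_{\overline C}=\kappa(\mu)\deg\xi|_{\overline C}+\sum_\Gamma c_\Gamma\deg D_\Gamma|_{\overline C}$ on every such curve. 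Performing the same computation at the level of the Hodge bundle rather than curve by curve upgrades this to an identity of classes
\[
\lambda\equiv\kappa(\mu)\,\xi+\sum_\Gamma c_\Gamma D_\Gamma\quad\text{in }\Pic_{\bbQ}(\BPP),
\]
together with its sharpened form, that $\xi$ --- and hence also $\lambda$ --- is $\bbQ$-linearly equivalent to an effective divisor supported on the full boundary $\partial\BPP=\BPP\setminus\PP_g^k(\mu)$. This is the promised ``incarnation in Teichm\"uller dynamics'': the nonvarying constant is precisely what makes the boundary coefficients uniform.

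Granting this relation, the two conclusions follow at once. Restricting to the open stratum kills every $D_\Gamma$, so $\xi$ (and $\lambda$) vanish there, and by the first paragraph the tautological ring is trivial. For affineness, the relation exhibits the relatively ample class $\xi$ as an effective divisor whose support is all of $\partial\BPP$; upgrading this to genuine ampleness, the open stratum is the complement of the support of an ample divisor in a projective variety, hence affine. The vanishing of $H_i$ for $i$ above the complex dimension is then the Andreotti--Frankel property of smooth affine varieties.

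The main obstacle is establishing the boundary relation with the correct positivity. This requires translating the tabulated nonvarying data of \cite{ChenMoeller1, ChenMoeller2} --- component by component --- into intersection numbers on $\BPP$, matching the Siegel--Veech contribution to the boundary coefficients $c_\Gamma$ and checking that the resulting class is effective with full support. The genuinely delicate step is the passage from ``$\xi$ is an effective boundary class'' to ``the boundary supports an \emph{ample} divisor'': positivity must be verified against every curve class, not merely against Teichm\"uller curves, and this is where the fine geometry of $\BPP$ and the relative ampleness of $\xi$ must be combined, for instance through a Nakai--Moishezon argument or an induction on the boundary strata.
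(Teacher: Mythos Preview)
There is a genuine gap at the step where you claim that restricting your relation to the open stratum forces $\xi$ (and $\lambda$) to vanish. The identity you produce,
\[
\lambda\;\equiv\;\kappa(\mu)\,\xi\;+\;\sum_\Gamma c_\Gamma D_\Gamma\quad\text{in }\Pic_{\bbQ}(\BPP),
\]
is not a \emph{new} relation coming from the nonvarying hypothesis: it is the universal tautological relation $12\lambda=\kappa_\mu\eta+\text{(boundary)}$, which holds on every stratum. Indeed, the Eskin--Kontsevich--Zorich formula says $L_\mu=\kappa_\mu/12+(\text{Siegel--Veech term})$, and the Siegel--Veech term is exactly the intersection with the horizontal boundary $D_h$; once you absorb it into the $\sum c_\Gamma D_\Gamma$, the coefficient of $\xi$ on the interior is $\kappa_\mu/12$ regardless of whether the stratum is nonvarying. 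Restricting to the interior therefore yields only $\lambda=(\kappa_\mu/12)\,\xi$, a single proportionality, which does not make either class zero. Your ``sharpened form'' that $\xi$ itself is boundary-supported is asserted but not derived. More fundamentally, the passage ``performing the same computation at the level of the Hodge bundle rather than curve by curve'' is not legitimate: the constancy of $\deg\lambda/\deg\xi$ over all Teichm\"uller curves does not upgrade to a relation in $\Pic_\bbQ(\BPP)$ independent of the universal one, because Teichm\"uller curves need not detect the full Picard group. The paper's Remark~\ref{remark:varying} makes precisely the converse implication ($\eta$ trivial $\Rightarrow$ nonvarying), and the reverse direction is what is at stake.

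The paper's actual argument is different and more concrete. The input from \cite{ChenMoeller1,ChenMoeller2} is not merely the nonvarying \emph{conclusion} but the \emph{mechanism}: for each listed stratum there is an explicit effective divisor $D$ in $\MM_{g,n}$ (hyperelliptic, Weierstrass, Brill--Noether, theta-null, etc.) that is \emph{disjoint} from the stratum and whose class is $a\lambda+\sum b_i\psi_i$. Pulling back via the relations $(m_i+k)\psi_i=\eta$ and $12\lambda=\kappa_\mu\eta$ gives $0=[D]|_{\PP}=c\cdot\eta$ with a computable $c\neq 0$, hence $\eta=0$ directly (Proposition~\ref{prop:disjoint}). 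For the handful of strata established via the Harder--Narasimhan filtration, that filtration exists on the entire stratum (not just on Teichm\"uller curves) and furnishes a second relation $\lambda=L_\mu\eta$ with $L_\mu\neq\kappa_\mu/12$, again forcing $\eta=0$. Affineness then follows immediately (Proposition~\ref{prop:affine}) because $\kappa+\sum\psi_i$ is ample on $\BM_{g,n}$ and has just been shown to be trivial on the stratum, so it is supported on the complement; no Nakai--Moishezon argument on the multi-scale space is required.
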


In contrast if a stratum is varying, then the tautological classes should not be all trivial (see Remarks~\ref{remark:varying} and~\ref{remark:speculation}). 

Next we consider $k$-differentials of infinite area, i.e., when $\mu$ contains at least one entry $\leq -k$ (see \cite{BCGGM-k} for an introduction to $k$-differentials).  

\begin{theorem}
\label{thm:infinite}
Strata of $k$-differentials of infinite area are affine varieties.  
\end{theorem}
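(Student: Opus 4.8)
The plan is to prove the equivalent statement that the projectivized stratum $\PP_g^k(\mu)$ is affine: the space of nonzero $k$-differentials is the complement of the zero section in the total space of $\OO(-1)$ over $\PP_g^k(\mu)$, and this space is affine if and only if $\PP_g^k(\mu)$ is (the total space of a line bundle over an affine variety is affine, deleting a divisor preserves affineness, and a good $\bbC^*$-quotient of an affine variety is affine). The engine will be a relation among divisor classes, together with Goodman's criterion that a variety is affine precisely when it has a projective compactification whose boundary, after a blow-up supported on the boundary, is the support of an effective ample divisor. For the compactification I would use the moduli space of \msds{} $\BPP_g^k(\mu)$ of \cite{BCGGM-k}, a smooth proper Deligne--Mumford stack in which $\PP_g^k(\mu)$ is the complement of a normal crossing boundary $\partial\PP = \bigcup_\Gamma \Delta_\Gamma$ and to which the tautological class $\xi = c_1(\OO(1))$ and the cotangent classes $\psi_i$ extend.

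First I would record the basic divisor relation on the interior. The universal $k$-differential is a section of $\omega_{\calC/\PP}^{\otimes k}\otimes \pi^*\OO(1)$ on the universal curve $\pi\colon \calC \to \PP$ whose relative divisor is $\sum_j m_j s_j$, where $s_j$ marks the $j$-th zero or pole. Restricting the resulting isomorphism $\omega_{\calC/\PP}^{\otimes k}\otimes\pi^*\OO(1) \cong \OO_{\calC}(\sum_j m_j s_j)$ along the section $s_i$ of a pole with $m_i \le -k$, and using $c_1(s_i^*\OO_{\calC}(s_i)) = -\psi_i$ together with $c_1(s_i^*\omega_{\calC/\PP}) = \psi_i$, yields
\begin{equation}
\xi = -(k+m_i)\,\psi_i .
\end{equation}
The sign is the crux of the matter: in the finite-area range $m_i > -k$ the coefficient $-(k+m_i)$ is negative, whereas in the infinite-area range $m_i \le -k$ it is $\ge 0$, so that $\xi$ is a nonnegative multiple of the nef, semiample class $\psi_i$. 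The borderline case $m_i = -k$ gives $\xi \equiv 0$ on the interior, reflecting that the leading $k$-residue at a pole of order exactly $k$ is a nowhere-vanishing section of $\OO(1)$, hence trivializes it.

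I would then convert this numerical positivity into affineness. Since $\psi_i$ is semiample on $\overline{\mathcal M}_{g,n}$ it defines a morphism to a projective variety; pulling this morphism back to $\BPP_g^k(\mu)$ along the map remembering the pointed curve, and invoking the relation, exhibits $\xi$ as the pullback of an ample class, so that $\xi + (k+m_i)\psi_i$ is represented by an effective divisor supported on $\partial\PP$. The goal is to upgrade this to an effective ample divisor with support exactly $\partial\PP$ and to feed it into Goodman's criterion. The main obstacle lies here: passing from the numerical relation to genuine ampleness requires (i) ruling out complete curves and higher-dimensional complete subvarieties of $\PP_g^k(\mu)$, where the relation gives $\xi \cdot C = -(k+m_i)\,(\psi_i \cdot C)$ and one must exploit the infinite-area sign to force a contradiction, and (ii) verifying positivity uniformly along every boundary stratum $\Delta_\Gamma$ of the multi-scale compactification by a Nakai--Moishezon-type analysis. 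It is exactly at this step that the hypothesis $m_i \le -k$ is indispensable: in the finite-area range the opposite sign obstructs the construction, and affineness there genuinely requires extra input, as in Theorem~\ref{thm:nonvarying}.
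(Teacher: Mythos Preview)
You have isolated the correct engine---the relation $\eta=(m_i+k)\psi_i$ and the sign flip at $m_i\le -k$---but your route through the multi-scale compactification and Goodman's criterion leaves exactly the gap you yourself flag: you never establish ampleness of a boundary-supported divisor on $\BPP_g^k(\mu)$, and your step~(ii), a Nakai--Moishezon check across all boundary strata $\Delta_\Gamma$, is not carried out. That step is genuinely nontrivial on the multi-scale space (the boundary combinatorics are intricate, and $\xi$ alone is certainly not ample there), so as written the argument is incomplete rather than merely sketchy.

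The paper bypasses this entirely by working not in a compactification of the stratum but in the ambient $\BM_{g,n}$, where ampleness is already available off the shelf. From the relations $\eta=(m_i+k)\psi_i$ and $(2g-2)\eta = k\kappa - \sum_j m_j\psi_j$ one gets
\[
k\Big(\kappa+\sum_{j=1}^n\psi_j\Big) \;=\; (2g-2+n)\,\eta \;=\; (2g-2+n)(m_1+k)\,\psi_1
\]
on $\PP_g^k(\mu)$, so $\kappa+\sum_j\psi_j + a\psi_1$ is trivial on the stratum with $a=-(2g-2+n)(m_1+k)/k\ge 0$. But $\kappa+\sum_j\psi_j$ is ample on $\BM_{g,n}$ by Cornalba, and $\psi_1$ is nef, hence $\kappa+\sum_j\psi_j+a\psi_1$ is ample on $\BM_{g,n}$. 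An ample class on $\BM_{g,n}$ that restricts to zero on $\PP_g^k(\mu)$ can be represented, on the closure of the stratum, by an effective divisor supported in the complement of $\PP_g^k(\mu)$; this gives affineness immediately (Proposition~\ref{prop:affine}). The moral: rather than manufacturing ampleness on a bespoke compactification, pull the stratum back into $\BM_{g,n}$ and use the ample class that is already sitting there. Your sign observation is precisely what makes the coefficient $a$ nonnegative and hence keeps ample\,+\,nef ample.
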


Affine varieties are Stein spaces in analytic geometry.  By \cite{AndreottiFrankel, Narasimhan} we thus obtain the following result about the homology of an affine stratum of differentials. 

\begin{corollary}
\label{cor:homology}
Let $\PP_g^k(\mu)$ be one of the above affine strata of differentials. Then $H_d(\PP_g^k(\mu), \bbZ) = 0$ if $d > \dim_{\bbC} \PP_g^k(\mu)$ and $H_d(\PP_g^k(\mu), \bbZ)$ is torsion free if $d = \dim_{\bbC} \PP_g^k(\mu)$. 
\end{corollary}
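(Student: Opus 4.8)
The plan is to deduce the statement formally from the affineness established in Theorems~\ref{thm:nonvarying} and~\ref{thm:infinite} together with the classical topology of Stein spaces, so that the corollary becomes a citation of \cite{AndreottiFrankel, Narasimhan}. First I would record that a complex affine variety is a Stein space: its coordinate ring furnishes globally defined holomorphic functions that separate points and provide local embeddings, so the analytification satisfies the defining properties of a Stein space. Hence each stratum under consideration, viewed as the affine variety of complex dimension $n := \dim_{\bbC} \PP_g^k(\mu)$, is Stein.

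Next I would invoke the Andreotti--Frankel theorem in the form extended to singular spaces by Narasimhan: a Stein space of complex dimension $n$ has the homotopy type of a CW complex of real dimension at most $n$. In the smooth case this follows from Morse theory applied to a strictly plurisubharmonic exhaustion function, whose nondegenerate critical points all have index $\leq n$; the result of \cite{Narasimhan} removes the smoothness hypothesis and thus covers the possibly singular strata. This immediately yields $H_d(\PP_g^k(\mu),\bbZ)=0$ for $d > n$, since a CW complex with cells only in dimensions $\leq n$ has trivial homology above degree $n$.

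For the top-degree statement I would pass to the cellular chain complex of such a CW model $X$. Because $X$ carries no cells in dimension $> n$, we have $C_{n+1}(X)=0$, and therefore $H_n(X) = \ker\bigl(\partial_n : C_n(X) \to C_{n-1}(X)\bigr)$ is a subgroup of the free abelian group $C_n(X)$. A subgroup of a free abelian group is free, so $H_n(X)$ is free, i.e.\ $H_d(\PP_g^k(\mu),\bbZ)$ is torsion free for $d = n$.

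The point requiring the most care is not the Morse theory but rather the interpretation of the homology: the strata are naturally Deligne--Mumford stacks, and the clean statement holds for their coarse moduli spaces, which are precisely the affine varieties produced by Theorems~\ref{thm:nonvarying} and~\ref{thm:infinite}. I would therefore make explicit that $H_d(\PP_g^k(\mu),\bbZ)$ is computed on the coarse space, after which the singular version of Andreotti--Frankel applies directly with no smoothness assumption and the corollary follows at once.
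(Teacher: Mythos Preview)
Your proposal is correct and follows exactly the route indicated in the paper, which simply records that affine varieties are Stein and cites \cite{AndreottiFrankel, Narasimhan} without further argument. Your write-up supplies the standard details behind that citation (the CW model of real dimension $\leq n$ and the observation that $H_n$ sits inside the free group $C_n$), together with the sensible clarification that the homology is taken on the coarse moduli space.
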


\begin{remark}
For strata of holomorphic differentials with $\mu = (m_1, \ldots, m_n)$ and $m_i \geq 0$ for all $i$, a (strongly) $(g+1)$-convex exhaustion function was constructed on $\PP_g^1(\mu)$ by using the area and length functions on flat surfaces (see~\cite[Proposition 3.17]{Mondello}), i.e., $\PP_g^1(\mu)$ is a $(g+1)$-complete complex space. It follows that $H_d(\PP_g^1(\mu), \bbZ)$ is torsion and $H_d(\PP_g^1(\mu), \bbC) = 0$ for $d> \dim_{\bbC} \PP_g^1(\mu) + g = 3g-2+n$ (see~\cite[Theorem 0.3]{BallicoBolondi}).\footnote{Note that the $q$-complete index is defined in \cite{Mondello} one higher than in~\cite{BallicoBolondi}. } 
\end{remark}

In general little is known about the homology of strata of differentials (nevertheless see~\cite{CMZEuler} for computing the Euler characteristics and~\cite{Zykoski} for the $L^{\infty}$-isodelaunay decomposition). Even if one is interested in strata of holomorphic differentials in high genus, differentials in low genus and meromorphic differentials naturally appear in the boundary of compactified strata (\cite{BCGGM1, BCGGM3}). Therefore, we expect that the above results can be useful for inductively computing the homology of strata of differentials.   

Finally we turn to the stratification of the (ordinary) Hodge bundle of holomorphic differentials (up to scale) as the disjoint union of $\PP_g^1\{\mu\}$, where $\mu$ runs over all partitions of $2g-2$ and $\{ \mu \}$ denotes the version of strata with unlabeled zeros. Note that $\PP_g^1\{ \mu\}$ is a finite quotient of $\PP_g^1(\mu)$ under the group action permuting the zeros that have the same order.  We also denote by $\BPP_g^1\{\mu\}$ the closure of $\PP_g^1\{\mu\}$ in the extended Hodge bundle over the Deligne--Mumford moduli space $\BM_g$ of stable curves. We use similar notations for the stratification of the (quadratic) Hodge bundle of quadratic differentials with at worst simple poles and the extension over $\BM_{g,s}$, where $s$ is a given number of simple poles to start with.  

\begin{theorem}
\label{thm:extremal}
Let $\mu = (m_1, \ldots, m_n)$ be a signature of abelian differentials with $m_i \geq 1$ for all $i$, 
and let $\mu' = (m_1 + m_2, m_3, \ldots, m_n)$. Then the stratum $\BPP_g^1\{\mu'\}$ is an extremal effective divisor in $\BPP_g^1\{\mu\}$. 

Let $\mu = (m_1, \ldots, m_n)$ be a signature of quadratic differentials with $m_i \geq -1$ for all $i$, 
 and let $\mu' = (m_1 + m_2, m_3, \ldots, m_n)$, where at least one of $m_1, m_2$ is positive. Then the stratum $\BPP_g^2\{\mu'\}$ is an extremal effective divisor in $\BPP_g^2\{\mu\}$. 
\end{theorem}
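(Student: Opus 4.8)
The plan is to prove extremality through the standard covering-curve criterion: if $D$ is an irreducible effective divisor on a projective variety $X$ and $D$ is swept out by a family of irreducible curves $B$ with $D\cdot B<0$, then $[D]$ spans an extremal ray of $\BEff(X)$. Since each connected component $\BPP_g^1\{\mu'\}$ of a stratum closure is irreducible, it suffices to produce such a covering family inside $D=\BPP_g^1\{\mu'\}\subset X=\BPP_g^1\{\mu\}$, and the entire content becomes the computation of the normal bundle $\OO_D(D)$ together with a positivity statement on a moving curve. The quadratic case will run in parallel, the hypothesis that one of $m_1,m_2$ is positive serving to guarantee $M:=m_1+m_2\geq 0$ so that the merged point $p$ is genuinely a zero and the same local geometry applies.

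First I would record the divisor-class relation advertised in the abstract. Over the open stratum let $\pi\colon\CC\to X^\circ$ be the universal curve with its zero-sections $Z_1,\dots,Z_n$, let $\xi=c_1(\OO(1))$ be the tautological class of the projectivized Hodge bundle, and set $\psi_i=c_1(\omega_\pi|_{Z_i})$. The universal differential is a section of $\omega_\pi\otimes\pi^*\OO(1)$ that is nonzero off its zeros, so $\omega_\pi\otimes\pi^*\OO(1)\cong\OO_\CC\bigl(\sum_i m_iZ_i\bigr)$. Restricting to $Z_j$ and using $(Z_j^2)|_{Z_j}=-\psi_j$ and $(Z_i\cdot Z_j)|_{Z_j}=D_{ij}$, the locus where $z_i$ and $z_j$ collide, gives
\[
(m_j+1)\,\psi_j=-\xi+\sum_{i\neq j}m_i\,D_{ij}.
\]
For quadratic differentials the identical computation with $\omega_\pi^{\otimes 2}$ yields $(m_j+2)\psi_j=-\xi+\sum_{i\neq j}m_iD_{ij}$.

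Next I would extract the normal bundle of $D=D_{12}$. On $D_{12}$ one has $\psi_1=\psi_2=\psi_p$, the cotangent class at the merged zero $p$, together with $D_{1i}=D_{2i}=D_{p,i}$, so restricting the relations for $j=1,2$ gives
\[
(m_1+1)\psi_p=-\xi+m_2\,(D|_D)+\sum_{i\geq 3}m_iD_{p,i},\qquad (m_2+1)\psi_p=-\xi+m_1\,(D|_D)+\sum_{i\geq 3}m_iD_{p,i},
\]
where $D|_D$ denotes $c_1(\OO_D(D))$. Subtracting yields $\OO_D(D)=-\psi_p$ whenever $m_1\neq m_2$; when $m_1=m_2$ I would instead substitute the analogous relation $(M+1)\psi_p=-\xi+\sum_{i\geq 3}m_iD_{p,i}$ for the merged zero on $D\cong\BPP_g^1\{\mu'\}$ to reach the same identity $\OO_D(D)=-\psi_p$ (the $+2$'s in the quadratic relations cancel identically, giving $\OO_D(D)=-\psi_p$ there as well). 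Thus $D\cdot B=-\psi_p\cdot B$ for any curve $B\subset D$.

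Finally I would exhibit the covering family. As $\psi_p=\phi^*\psi$ for the map $\phi\colon D\to\BM_{g,1}$ recording $(C,p)$, and $\psi$ is nef, $\psi_p$ is nef on $D$; since the underlying curve genuinely varies, $\phi$ is nonconstant and $\psi_p\not\equiv 0$. Taking $B$ to be a general complete-intersection curve of ample divisors on $D$, it sweeps out $D$, is not contracted by $\phi$, and hence $\psi_p\cdot B=\psi\cdot\phi_*B>0$, so that $D\cdot B=-\psi_p\cdot B<0$ and the covering-curve criterion gives extremality. I expect the main obstacle to be the compactification bookkeeping in the normal-bundle step: the identity $\omega_\pi\otimes\pi^*\OO(1)\cong\OO(\sum m_iZ_i)$ acquires corrections over the Deligne--Mumford boundary where the stable differential vanishes on components, so the honest identity reads $\OO_D(D)=-\psi_p+E$ with $E$ supported on $\partial D$, and since a curve $B$ that moves the underlying curve necessarily meets $\partial D$ one must control the sign and size of $E\cdot B$. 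This is precisely where the exact divisor relation on the compactified stratum, together with its incarnation in Teichm\"uller dynamics through the Siegel--Veech and Lyapunov-exponent data, is the decisive input; a secondary point to verify is that $\psi_p\not\equiv 0$ (equivalently that $\phi$ is nonconstant) in the low-genus or low-dimensional strata where the pointed curve has little room to vary.
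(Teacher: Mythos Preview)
Your overall plan—the covering-curve criterion together with a normal-bundle computation $\OO_D(D)\equiv -\psi_p$ on the open locus—matches the paper's strategy, and your derivation of $(m_j+1)\psi_j=-\xi+\sum_{i\neq j}m_iD_{ij}$ and of $D|_D=-\psi_p$ away from the boundary is correct and agrees (after the substitution $\eta=(m_1+m_2+1)\psi_p$) with the paper's numerical answer $C\cdot D/(C\cdot\eta)=-1/(m_1+m_2+1)$.

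The genuine gap is your choice of covering family. A general complete-intersection curve $B$ in $D=\BPP_g^1\{\mu'\}$ meets \emph{every} boundary divisor of $D$, so the correction $E$ in $\OO_D(D)=-\psi_p+E$ contributes to $D\cdot B$, and there is no a priori reason for $E\cdot B$ to have a definite sign; the coefficients appearing in the exact divisor relation from \cite{CCMKodaira} are not all of one sign across the various boundary strata, so one cannot simply absorb $E$ into an inequality. You yourself flag this obstacle, but you do not resolve it.

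The paper's key move is precisely to sidestep $E$ rather than bound it: the covering family consists of closures of Teichm\"uller curves $C\subset\PP_g^1\{\mu'\}$. Their decisive property (invoked already in the abstract and in Section~\ref{sec:extremal}) is that their only boundary intersections in $\BPP_g^1\{\mu\}$ lie in $D_h$, the locus of stable differentials with simple polar nodes, and in $D$ itself. Thus the full compactified relation $12\lambda-D_h-\kappa_\mu\eta=(\text{positive constant})\cdot D+(\text{other boundary})$ restricts cleanly to $C$, and combining it with $12\lambda-D_h=\kappa_{\mu'}\eta$ on $C$ (valid because $C$ avoids the ``other boundary'' of $\BPP_g^1\{\mu'\}$ as well) yields the negative intersection directly. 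Density of Teichm\"uller curves in $D$ and the fact that the ratio $C\cdot D/(C\cdot\eta)$ is a constant independent of $C$ then feed into the extremality argument of \cite{Gheorghita}. In short, your final paragraph treats Teichm\"uller dynamics as auxiliary data for estimating $E\cdot B$; in the paper it is the mechanism that makes $E\cdot C=0$.
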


Differentials of type $\mu'$ as degeneration of differentials of type $\mu$ can be realized geometrically by merging the two zeros of order $m_1$ and $m_2$. Theorem~\ref{thm:extremal} thus has an amusing interpretation that the stratification of the Hodge bundle (for abelian and quadratic differentials of finite area) is extremal, where merging two zeros leads to an extremal effective divisor in the boundary. We remark that if a stratum of type $\mu$ is disconnected, then Theorem~\ref{thm:extremal} holds for each connected component of type $\mu$ and the component of type $\mu'$ obtained by merging the zeros in the prescribed component of type $\mu$. 
 
\subsection*{Strategies of the proofs} 
To prove Theorem~\ref{thm:nonvarying} we show that the nonvarying property yields an extra relation between tautological divisor classes which forces them to be trivial. To prove Theorem~\ref{thm:infinite} we observe the sign change of $m_i+k$ for entries $m_i\leq -k$ and $m_i > -k$ in $\mu$, and use it to exhibit an ample divisor class which is trivial on strata of $k$-differentials of infinite area.  To prove Theorem~\ref{thm:extremal} we show that Teichm\"uller curves contained in 
$\PP_g^k\{\mu'\}$ have negative intersection numbers with the divisor class of $\BPP_g^k\{\mu'\}$ in $\BPP_g^k\{\mu\}$ for $k = 1, 2$.

\subsection*{Related works} 
Previously in \cite{ChenAffine} the author showed that the nonvarying strata of abelian differentials for $\mu = (4)^{\odd}$, $(3,1)$, $(2,2)^{\odd}$, and $(6)^{\even}$ are affine by using the geometry of canonical curves in low genus (see also~\cite{LooijengaMondello} for a related discussion). This is now completed for all known nonvarying strata (including for quadratic differentials). Moreover in \cite{ChenAffine} the author also showed that there is no complete curve in any stratum of $k$-differentials of infinite area, which now follows from Theorem~\ref{thm:infinite}. Note that in general the implication does not work the other way around, e.g., $\mathbb A^2$ removing a point contains no complete curves but it is not affine. For unprojectivized strata of holomorphic differentials, nonexistence of complete curves was shown in \cite{Gendron} and \cite{ChenComplete}. Finally the beginning case of Theorem~\ref{thm:extremal} for the principal strata, i.e., for all zeros equal to one, was previously established in \cite{Gheorghita}. Now this extremal behavior is shown to hold for all zero types. Therefore, our current results can further enhance the understanding of the tautological ring, affine geometry, topology, and birational geometry of the strata. We hope that the ideas employed in this paper can shed some light on related questions.  

\subsection*{Acknowledgements} The author would like to thank Matteo Costantini, Iulia Gheorghita, Samuel Grushevsky, Richard Hain, Martin M\"oller, Gabriele Mondello, and Scott Mullane for helpful discussions on relevant topics. 

%%%%%%%%%%%%%%%%%
\section{Triviality criteria}
\label{sec:criteria}
%%%%%%%%%%%%%%%%%

Marking the zeros and poles of differentials, $\PP_g^k(\mu)$ can be viewed as a subvariety in the moduli space $\MM_{g,n}$ of curves of genus $g$ with $n$ marked points. We introduce the following tautological divisor classes that will be used throughout the paper.  Let $\eta$ be the first Chern class of the tautological line bundle $\OO(-1)$ on $\PP_g^k(\mu)$ whose fibers are spanned by the underlying $k$-differentials $\omega$. Let $\lambda$ be the first Chern class of the Hodge bundle, $\kappa$ be the Miller--Morita--Mumford class, and $\psi_i$ be the cotangent line bundle class associated to the $i$-th marked point. When $m_i \neq -k$ for all $i$, we will often use the following quantity 
$$\kappa_\mu = \frac{1}{k}(2g-2+n)  -\sum_{i=1}^n \frac{1}{m_i+k}.$$ 

\begin{proposition}
\label{prop:disjoint}
Suppose $m_i \neq -k $ for all the entries of $\mu$. Let $D$ be an effective divisor in $\MM_{g,n}$ 
with divisor class $a\lambda + \sum_{i=1}^n b_i \psi_i$. Then the pullback of $D$ to $\PP_g^k(\mu)$ has divisor class 
$$ \frac{1}{12}\Big(\frac{(2g-2+n)a}{k} + \sum_{i=1}^n \frac{12b_i  -a  }{m_i +k}\Big) \eta. $$
Moreover if the above coefficient of $\eta$ is nonzero and if $D$ and $\PP_g^k(\mu)$ are disjoint, then $\eta$ is trivial on $\PP_g^k(\mu)$ and the tautological ring of $\PP_g^k(\mu)$ is trivial.  
\end{proposition}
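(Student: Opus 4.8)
The plan is to push every tautological divisor class on $\PP_g^k(\mu)$ down to a multiple of $\eta$ by using the defining relation of the universal $k$-differential, and then to let disjointness force $\eta$ itself to vanish. Let $\pi\colon \calC \to \PP_g^k(\mu)$ be the universal curve with sections $x_1,\dots,x_n$ marking the zeros and poles, and let $K=c_1(\omega_\pi)$ be the relative canonical class. The universal $k$-differential is a rational section of $\omega_\pi^{\otimes k}\otimes \pi^*\OO(1)$ with divisor $\sum_i m_i x_i$, so comparing first Chern classes on $\calC$ gives the single relation
$$kK = \pi^*\eta + \sum_{i=1}^n m_i[x_i],$$
where $c_1(\pi^*\OO(1))=-\pi^*\eta$. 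This relation is the backbone of the whole argument.

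First I would extract the $\psi$-classes by restricting along the sections. Pulling the relation back by $x_i$ and using $x_i^*K=\psi_i$, $x_i^*\pi^*\eta=\eta$, the self-intersection $x_i^*[x_i]=-\psi_i$, and $x_i^*[x_j]=0$ for $j\ne i$ (disjoint sections), everything collapses to $(m_i+k)\psi_i=\eta$; here the hypothesis $m_i\ne -k$ is exactly what lets me divide and write $\psi_i=\eta/(m_i+k)$. Next I would obtain $\lambda$ by pushing forward. Since $\PP_g^k(\mu)$ sits in the open locus $\MM_{g,n}$, Mumford's relation reads $12\lambda=\kappa_1=\pi_*(c_1(\omega_\pi)^2)$. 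Squaring the backbone relation, dividing by $k^2$, and pushing forward through the identities $\pi_*((\pi^*\eta)^2)=0$, $\pi_*(\pi^*\eta\cdot[x_i])=\eta$, $\pi_*([x_i]^2)=-\psi_i$, and $\pi_*([x_i][x_j])=0$ for $i\ne j$, I obtain
$$12\lambda = \frac{1}{k^2}\Big(2\eta\sum_i m_i - \sum_i m_i^2\psi_i\Big).$$
Substituting $\psi_i=\eta/(m_i+k)$ and $\sum_i m_i = k(2g-2)$ and simplifying $m_i^2/(m_i+k)=(m_i-k)+k^2/(m_i+k)$ turns this into $\lambda=\tfrac{1}{12}\kappa_\mu\,\eta$. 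Feeding $\lambda$ and the $\psi_i$ into $a\lambda+\sum_i b_i\psi_i$ and collecting the coefficient of $\eta$ then reproduces the claimed formula.

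For the final assertion, disjointness of $D$ and $\PP_g^k(\mu)$ means the defining section of $\OO_{\MM_{g,n}}(D)$ restricts to a nowhere-vanishing section of the pullback line bundle, so that pullback is trivial and its class is $0$. By the formula just derived this class equals $c\,\eta$ with $c$ the prescribed nonzero coefficient, forcing $\eta=0$ over $\bbQ$. To upgrade this to triviality of the whole tautological ring I would note that the backbone relation, fed through the same pushforward identities, writes each $\kappa_a=\pi_*(K^{a+1})$ as a polynomial in $\eta$ and the $\psi_i$, hence (via $\psi_i=\eta/(m_i+k)$) as a polynomial in $\eta$ with zero constant term; the higher Hodge classes $\lambda_j$ reduce likewise through the Grothendieck--Riemann--Roch expansion of the Hodge bundle in terms of the $\kappa_a$. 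Thus $\eta$ generates the tautological ring, and $\eta=0$ makes every positive-codimension tautological class vanish, i.e. the ring is $\bbQ$.

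I expect the real work to be concentrated in the $\lambda$-computation: pinning down Mumford's relation in the convention $\kappa_1=\pi_*(c_1(\omega_\pi)^2)$ (so that no spurious $\psi_i$-corrections enter) and getting the self-intersection bookkeeping $\pi_*([x_i]^2)=-\psi_i$ right, since a sign slip there propagates directly into the final coefficient. The accompanying point to secure is the claim that $\eta$ generates the entire tautological ring rather than just the $\psi_i$ and $\lambda_1$, which is what turns the vanishing of a single class into triviality of the whole ring.
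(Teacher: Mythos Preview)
Your proof is correct and follows essentially the same route as the paper: reduce $\lambda$ and each $\psi_i$ to multiples of $\eta$ via the relations $\eta=(m_i+k)\psi_i$ and $12\lambda=\kappa_\mu\eta$, substitute into $a\lambda+\sum_i b_i\psi_i$, and then use disjointness together with the fact that $\eta$ generates the tautological ring. The only difference is packaging: the paper simply quotes these two relations and the generation statement from \cite{ChenTauto}, whereas you rederive them from the backbone identity $kK=\pi^*\eta+\sum_i m_i[x_i]$ by restricting along sections and by squaring and pushing forward, and you sketch why every $\kappa_a$ and $\lambda_j$ lies in the subring generated by $\eta$. Your self-contained derivation is a genuine plus for readability, and your caution about the convention $\kappa_1=\pi_*(c_1(\omega_\pi)^2)$ (Mumford's, not Arbarello--Cornalba's) is exactly the point that makes the constant $\kappa_\mu$ come out right.
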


For the nonvarying strata of abelian and quadratic differentials, the desired divisors $D$ in Proposition~\ref{prop:disjoint} arise from those used in \cite{ChenMoeller1, ChenMoeller2} which are disjoint from the respective strata. Occasionally we will use some variants of $\MM_{g,n}$ by marking fewer points or lifting to the spin moduli spaces, for which the above reasoning still works as we only consider the interior of these moduli spaces.   

\begin{proof}
The following relations hold on $\PP_g^k(\mu)$ (see \cite[Proposition 2.1]{ChenTauto}): 
$$\eta = (m_i + k) \psi_i, \quad 12\lambda = \kappa = \kappa_\mu \eta. $$
Then the claim on the pullback divisor class follows from these relations.  

For the other claim, we only need to show that in this case $\eta$ being trivial implies that the tautological ring of $\PP_g^k(\mu)$ is trivial, which indeed follows from the fact that $\eta$ generates the tautological ring when $\mu$ contains no entries equal to $-k$ (see \cite[Theorem 1.1]{ChenTauto}).  
\end{proof}

\begin{proposition}
\label{prop:affine}
If all tautological divisor classes are trivial on $\PP_g^k(\mu)$, then $\PP_g^k(\mu)$ is an affine variety. 
\end{proposition}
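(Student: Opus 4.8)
The plan is to reduce affineness to a positivity statement for the Hodge class $\lambda$ and then invoke Goodman's criterion. By hypothesis every tautological divisor class on $X:=\PP_g^k(\mu)$ vanishes; in particular $\eta=0$ and $\lambda=0$ in $\Pic(X)_{\bbQ}$. I would use these two vanishings for different purposes: the vanishing of $\eta$ to control the fibers of the projection to $\Mg$, and the vanishing of $\lambda$ to force the positive part of the Hodge class onto the boundary of a compactification.

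First I would show that the projection $p\colon X\to\Mg$ remembering only the underlying curve is quasi-finite. Since $X$ sits inside the projectivized bundle of $k$-differentials over $\Mg$, on which $-\eta=c_1(\OO(1))$ is relatively ample, the restriction of $\OO(1)$ to any fiber of $p$ is ample; as $\eta=0$ on $X$ this restriction is also trivial, which forces each fiber to be zero-dimensional. Composing $p$ with the Torelli map $\Mg\to\mathcal A_g$, which has finite fibers, I obtain a quasi-finite morphism $f\colon X\to\mathcal A_g$, so the image $f(X)$ has dimension $\dim_{\bbC} X$.

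Next I would pass to a projective compactification $\overline X$ of $X$ (for instance the closure in $\overline{\mathcal M}_{g,n}$ under the marking of zeros and poles, or in the space of multi-scale $k$-differentials), whose boundary $\partial X=\overline X\setminus X$ is a divisor. Recall that $\lambda$ is the pullback of the ample Hodge class $\theta$ on the Satake compactification $\mathcal A_g^{\mathrm{Sat}}$, so $\lambda$ is nef on $\overline X$; and by the previous step it is big, since $f$ is quasi-finite onto a subvariety of dimension $\dim_{\bbC} X$ on which $\theta$ is ample. Moreover any curve $C$ with $\lambda\cdot C=0$ is contracted by $f$, and because $f$ is quasi-finite on $X$ such a curve must lie in $\partial X$; by Nakamaye's description of the augmented base locus of a big and nef class in terms of its null locus, this locus is therefore contained in $\partial X$. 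Finally, the vanishing $\lambda=0$ on $X$ gives $N\lambda\sim_{\bbQ}E$ for an effective $\bbQ$-divisor $E$ with $\Supp E\subseteq\partial X$. A big and nef class that is linearly equivalent to an effective divisor supported on the codimension-one boundary, and whose augmented base locus lies in that boundary, exhibits the boundary as the support of an ample divisor, whence Goodman's criterion yields that $X$ is affine.

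I expect the main obstacle to be the positivity input of the third step: verifying that $\lambda$ is genuinely big on $\overline X$ rather than merely nef, that its augmented base locus is confined to $\partial X$, and then passing from ``big, nef, and effective on the boundary'' to an honest ample divisor supported on $\partial X$ in the precise form Goodman's criterion requires. This demands care with the boundary of the chosen compactification -- ensuring it has pure codimension one and, if necessary, adding the missing boundary components with small positive coefficients -- as well as with the Torelli and relative-ampleness arguments underlying quasi-finiteness. The comparison with $\Mg$ itself, where $\lambda$ is nonzero in the interior and the space is not affine, indicates that the vanishing $\lambda=0$ on $X$ is exactly the feature that makes the argument go through.
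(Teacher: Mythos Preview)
Your quasi-finiteness step contains a genuine error. Relative ampleness of $\OO(1)$ on the projectivized bundle of $k$-differentials only tells you that $\OO(1)$ is ample on each \emph{complete} fiber over $\Mg$; the fibers of $X\to\Mg$ are merely locally closed in those projective fibers, and on a noncomplete variety an ample line bundle may well be trivial. Concretely, take $X=\PP_2^1(1,1)$. Since $10\lambda=\delta_0+2\delta_1$ on $\BM_2$, we have $\lambda=0$ on $\MM_2$ and hence on $X$; combined with $12\lambda=\kappa_\mu\eta$ (here $\kappa_\mu=3$) this gives $\eta=0$, so all tautological classes vanish on $X$. Yet the fiber of $X\to\MM_2$ over $[C]$ consists of ordered pairs $(p,\iota(p))$ with $p$ not a Weierstrass point, hence is isomorphic to $C$ minus six points---one-dimensional, not finite. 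Without quasi-finiteness your bigness claim for $\lambda$ on $\overline X$ fails, and the Nakamaye/Goodman chain that follows has nothing to stand on.

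The paper avoids this entire detour by choosing a tautological class that is already \emph{ample} on the ambient compactification rather than merely big and nef. By Cornalba's theorem, $\kappa+\sum_{i=1}^n\psi_i$ is ample on $\BM_{g,n}$. Under the hypothesis this class is trivial on $X=\PP_g^k(\mu)$, so on the closure $\overline X\subset\BM_{g,n}$ the restriction of an ample line bundle is represented by a divisor supported on $\overline X\setminus X$; the complement of the support of an ample effective divisor in a projective variety is affine, and no base-locus or null-locus analysis is needed. The lesson is that when several tautological classes vanish, you should exploit the one with the strongest positivity on the compactification---here $\kappa+\sum\psi_i$ rather than $\lambda$.
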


\begin{proof}
The tautological divisor class $\kappa + \sum_{i=1}^n \psi_i$ is ample on $\BM_{g,n}$ (see \cite[Theorem (2.2)]{Cornalba}). Hence $\kappa + \sum_{i=1}^n \psi_i$ being trivial on $\PP_g^k(\mu)$ implies that this ample divisor class can be supported on the boundary of $\PP_g^k(\mu)$. This proves the claim. 
\end{proof}

%%%%%%%%%%%%%%%%%
%%%%%%%%%%%%%%%%%
\section{Nonvarying strata}
\label{sec:nonvarying}
%%%%%%%%%%%%%%%%%
%%%%%%%%%%%%%%%%%

In this section we prove Theorem~\ref{thm:nonvarying}.  We use the effective divisors described in \cite{ChenMoeller1, ChenMoeller2} and then apply Propositions~\ref{prop:disjoint} and~\ref{prop:affine}. 

%%%%%%%%%%%%%%%%%
\subsection{The hyperelliptic strata}
%%%%%%%%%%%%%%%%%

Note that $\lambda$ is trivial on the hyperelliptic locus (see \cite{CornalbaHarris}), which implies that the hyperelliptic strata have trivial tautological rings and are affine varieties.  

%%%%%%%%%%%%%%%%%
\subsection{$\PP_3^1(4)^{\odd}$ and $\PP_3^1(3,1)$}
%%%%%%%%%%%%%%%%%

The divisor $H$ parameterizing hyperelliptic curves in $\MM_3$ is disjoint from these strata and has divisor class 
 $9\lambda$.   

%%%%%%%%%%%%%%%%%
\subsection{$\PP_3^1(2,2)^{\odd}$}
%%%%%%%%%%%%%%%%%

For $(X, p_1, p_2)\in \PP_3^1(2,2)^{\odd}$, by definition $p_1 + p_2$ is an odd theta characteristic, which maps $\PP_3^1(2,2)^{\odd}$ to the odd spin moduli space $\mathcal S_3^-$. The divisor $Z_3$ in $\mathcal S_3^-$ parameterizes odd theta characteristics with a double zero. It 
is disjoint from $\PP_3^1(2,2)^{\odd}$ and has divisor class $11\lambda$.   

%%%%%%%%%%%%%%%%%
\subsection{$\PP_3^1(2,1,1)$}
%%%%%%%%%%%%%%%%%

Marking the first two zeros maps $\PP_3^1(2,1,1)$ to $\MM_{3,2}$. The Brill--Noether divisor $BN_{3,(1,2)}^1$ in $\MM_{3,2}$ parameterizes curves that admit a $g^1_3$ given by $p_1 + 2p_2$. It is disjoint from $\PP_3^1(2,1,1)$ and has 
divisor class $-\lambda + \psi_1 + 3\psi_2$.    

%%%%%%%%%%%%%%%%%
\subsection{$\PP_4^1(6)^{\even}$}
%%%%%%%%%%%%%%%%%

For $(X, p)\in \PP_4^1(6)^{\even}$, by definition $3p$ is an even theta characteristic.  The theta-null divisor $\Theta_4$ in $\MM_{4,1}$ parameterizes curves that admit an odd theta characteristic whose support contains the marked point.  
It is disjoint from $\PP_4^1(6)^{\even}$ and has divisor class $30\lambda + 60 \psi_1$.     

%%%%%%%%%%%%%%%%%
\subsection{$\PP_4^1(6)^{\odd}$ and $\PP_4^1(5,1)$}
%%%%%%%%%%%%%%%%%

Marking the zero of the largest order, these strata map to $\MM_{4,1}$. The Brill--Noether divisor $BN_{3,(2)}^1$ in $\MM_{4,1}$ parameterizes curves that admit a $g^1_3$ ramified at the marked point. It is disjoint from both strata and has divisor class $8\lambda + 4\psi_1$.   

%%%%%%%%%%%%%%%%%
\subsection{$\PP_4^1(3,3)^{\nonhyp}$}
%%%%%%%%%%%%%%%%%

The divisor $\Lin_3^1$ in $\MM_{3,2}$ parameterizes curves that admit a $g^1_3$ with a fiber containing both marked points. It is disjoint from 
$\PP_4^1(3,3)^{\nonhyp}$ and has divisor class $8\lambda - \psi_1 - \psi_2$.   

%%%%%%%%%%%%%%%%%
\subsection{ $\PP_4^1(2,2,2)^{\odd}$}
%%%%%%%%%%%%%%%%%

For $(X, p_1, p_2, p_3)\in \PP_4^1(2,2,2)^{\odd}$, by definition $p_1 + p_2 + p_3$ is an odd theta characteristic.  
The divisor $Z_4$ in the odd spin moduli space $\mathcal S_4^-$ parameterizes odd theta characteristics with a double zero.  It 
is disjoint from $\PP_4^1(2,2,2)^{\odd}$ and has divisor class $12\lambda$.   

%%%%%%%%%%%%%%%%%
\subsection{ $\PP_4^1(3,2,1)$}
%%%%%%%%%%%%%%%%%

The Brill--Noether divisor $BN_{4,(1,1,2)}^1$ in $\MM_{4,3}$ parameterizes curves that admit a $g^1_4$ given by $p_1 + p_2 + 2p_3$. It is disjoint from 
$\PP_4^1(3,2,1)$ and has divisor class $-\lambda + \psi_1 + \psi_2 + 3\psi_3$.   

%%%%%%%%%%%%%%%%%
\subsection{ $\PP_5^1(8)^{\even}$}
%%%%%%%%%%%%%%%%%

The Brill--Noether divisor $BN_{3}^1$ in $\MM_{5,1}$ parameterizes curves with a $g^1_3$. It is disjoint from $\PP_5^1(8)^{\even}$ and has divisor class 
 $8\lambda$.   

%%%%%%%%%%%%%%%%%
\subsection{ $\PP_5^1(8)^{\odd}$}
%%%%%%%%%%%%%%%%%

The divisor $\Nfold_{5,4}^1(1)$ in $\MM_{5,1}$ parameterizes curves that admit a $g^1_4$ with a fiber containing $3p$. It is disjoint from 
$\PP_5^1(8)^{\odd}$ and has divisor class $ 7\lambda + 15\psi_1$.   

%%%%%%%%%%%%%%%%%
\subsection{$\PP_5^1(5,3)$}
%%%%%%%%%%%%%%%%%

The Brill--Noether divisor $BN_{4,(1,2)}^1$ in $\MM_{5,2}$ parameterizes curves that admit a $g^1_4$ with a fiber containing $p_1 + 2p_2$. It is disjoint from 
$\PP_5^1(5,3)$ and has divisor class $7\lambda + 7\psi_1 + 2\psi_2$.       

%%%%%%%%%%%%%%%%%
\subsection{$\PP_4^1(4,2)^{\even}$, $\PP_4^1(4,2)^{\odd}$, and $\PP_5^1(6,2)^{\odd}$}
\label{subsec:HN}
%%%%%%%%%%%%%%%%%

The nonvarying property of these strata was proved in \cite{YuZuo} by using the Harder--Narasimhan filtration of the Hodge bundle (see also \cite[Appendix A]{ChenMoeller2}).  Note that the filtration holds not only on a Teichm\"uller curve but also on (the interior of) each stratum.  Let $L_\mu$ be the sum of (nonnegative) Lyapunov exponents which satisfies that $\lambda = L_\mu \eta$ from the Harder--Narasimhan filtration on these strata.  In these cases we know that $L_\mu \neq \kappa_\mu / 12$.  Then the triviality of $\eta$ on these strata follows from the two distinct relations $\lambda = (\kappa_\mu/12) \eta$ and $\lambda = L_\mu \eta$.  

%%%%%%%%%%%%%%%%%
\subsection{Nonvarying strata of quadratic differentials in genus one and two}
%%%%%%%%%%%%%%%%%

Strata in genus $\leq 2$ can be generally dealt with as in Sections~\ref{subsec:g1} and~\ref{subsec:g2} below. 

%%%%%%%%%%%%%%%%%
\subsection{ $\PP^2_3(9,-1)^{\irr}$}
%%%%%%%%%%%%%%%%%

This stratum is disjoint from the hyperelliptic divisor $H$ in $\MM_3$ whose divisor class is $9\lambda$. 

%%%%%%%%%%%%%%%%%
\subsection{ $\PP^2_3(8)$, $\PP^2_3(7,1)$, $\PP^2_3(8,1,-1)$, $\PP^2_3(10, -1, -1)^{\nonhyp}$, and $\PP^2_3(9,-1)^{\reg}$}
%%%%%%%%%%%%%%%%%

Marking the zero of the largest order, these strata map to $\MM_{3,1}$. The divisor $W$ of Weierstrass points in $\MM_{3,1}$ 
is disjoint from these strata and has divisor class $ -\lambda + 6\psi_1$. 

%%%%%%%%%%%%%%%%%
\subsection{ $\PP^2_3(6,2)^{\nonhyp}$, $\PP^2_3(6,1,1)^{\nonhyp}$, $\PP^2_3(5,3)$, $\PP^2_3(5,2,1)$, 
$\PP^2_3(4,4)$, $\PP^2_3(4,3,1)$, $\PP^2_3(5,4,-1)$, $\PP^2_3(5,3,1,-1)$, $\PP^2_3(7,2,-1)$, 
$\PP^2_3(7,3, -1, -1)$, and $\PP^2_3(6,3,-1)^{\reg}$}
%%%%%%%%%%%%%%%%%

Marking the first two zeros, these strata map to $\MM_{3,2}$. The Brill--Noether divisor $BN^1_{3,(2,1)}$ in $\MM_{3,2}$ parameterizes curves that admit a $g^1_3$ given by $2p_1 + p_2$. It is disjoint from these strata and has divisor class $-\lambda + 3\psi_1 + \psi_2$. 

%%%%%%%%%%%%%%%%%
\subsection{$\PP^2_3(4,2,2)$, $\PP^2_3(3,3,2)^{\nonhyp}$, $\PP^2_3(4,3,2,-1)$, $\PP^2_3(3,2,2,1)$, $\PP^2_3(3,3,1,1)^{\nonhyp}$, and $\PP^2_3(3,3,3,-1)^{\reg}$}
%%%%%%%%%%%%%%%%%

Marking the first three zeros, these strata map to $\MM_{3,3}$. The Brill--Noether divisor $BN^1_{3,(1,1,1)}$ in $\MM_{3,3}$ parameterizes curves that admit a $g^1_3$ given by $p_1 + p_2 + p_3$. It is 
disjoint from these strata and has divisor class $-\lambda + \psi_1 + \psi_2 + \psi_3$. 

%%%%%%%%%%%%%%%%%
\subsection{ $\PP^2_4(13, -1)$, $\PP^2_4(11, 1)$, and $\PP^2_4(12)^{\reg}$}
%%%%%%%%%%%%%%%%%

Marking the zero of the largest order, these strata map to $\MM_{4,1}$. The divisor $W$ of Weierstrass points in $\MM_{4,1}$
is disjoint from these strata and has divisor class $-\lambda + 10\psi_1$. 

%%%%%%%%%%%%%%%%%
\subsection{ $\PP^2_4(10, 2)^{\nonhyp}$, $\PP^2_4(8,4)$, $\PP^2_4(8,3,1)$, and $\PP^2_4(9,3)^{\reg}$}
%%%%%%%%%%%%%%%%%

Marking the first two zeros, these strata map to $\MM_{4,2}$. The Brill--Noether divisor $BN^1_{4,(3,1)}$ in $\MM_{4,2}$ parameterizes curves that admit a $g^1_4$ given by $3p_1 + p_2$. It is disjoint from these strata and has divisor class $-\lambda + 6\psi_1 + \psi_2 $.  

%%%%%%%%%%%%%%%%%
\subsection{$\PP^2_4(7,5)$ and $\PP^2_4(6,6)^{\reg}$}
%%%%%%%%%%%%%%%%%

The Brill--Noether divisor $BN^1_{4,(2,2)}$ in $\MM_{4,2}$ parameterizes curves that admit a $g^1_4$ given by $2p_1 + 2p_2$. It is disjoint from these strata and has divisor class $-\lambda + 3\psi_1 + 3\psi_2$. 

%%%%%%%%%%%%%%%%%
\subsection{$\PP^2_4(7,3,2)$, $\PP^2_4(5,4,3)$, and $\PP^2_4(6,3,3)^{\reg}$}
%%%%%%%%%%%%%%%%%

The Brill--Noether divisor $BN^1_{4,(2,1,1)}$ in $\MM_{4,3}$ parameterizes curves that admit a $g^1_4$ given by $2p_1 + p_2 + p_3$. It is disjoint from 
these strata and has divisor class $-\lambda + 3\psi_1 + \psi_2 + \psi_3 $. 

%%%%%%%%%%%%%%%%%
\subsection{ $\PP^2_4(3,3,3,3)^{\reg}$}
%%%%%%%%%%%%%%%%%

It is disjoint from the Brill--Noether divisor $BN^1_{4,(1,1,1,1)}$ in $\MM_{4,4}$ whose divisor class is $-\lambda + \psi_1 + \psi_2 + \psi_3 + \psi_4$.  

%%%%%%%%%%%%%%%%%
\subsection{ $\PP^2_3(6,3,-1)^{\irr}$, $\PP^2_4(12)^{\irr}$, and $\PP^2_4(9,3)^{\irr}$}
%%%%%%%%%%%%%%%%%

We can use the Harder--Narasimhan filtration of the quadratic Hodge bundle for these strata (see \cite[Appendix A]{ChenMoeller2}) and argue as in~Section~\ref{subsec:HN}. 

%%%%%%%%%%%%%%%%%
\begin{remark}
\label{remark:varying}
%%%%%%%%%%%%%%%%%

We say that a stratum of abelian differentials $\PP^1_g(\mu)$ is varying, if it contains two Teichm\"uller curves that have distinct sums of Lyapunov exponents. In this case we claim that $\eta$ is nontrivial on $\PP^1_g(\mu)$. 

To see this, let $\Delta_0$ be the locus of stable differentials of type $\mu$ on curves with separating nodes only, where the differentials have simple poles with opposite residues at the nodes. If $\PP^1_g(\mu)$ is irreducible, then $\Delta_0$ is also irreducible and modeled on $\PP^1_{g-1}(\mu, \{-1,-1\})$.  Moreover, every Teichm\"uller curve in $\PP^1_g(\mu)$ only intersects $\Delta_0$ in the boundary (see \cite[Corollary 3.2]{ChenMoeller1}). 

Suppose that $\eta$ is trivial on $\PP^1_g(\mu)$. Then $\eta = e \delta_0$ on the partial compactification $\PP^1_g(\mu)\cup \Delta_0$ for some constant $e$. Since $\lambda = (\kappa_\mu / 12)\eta$ on $\PP^1_g(\mu)$, we have $\lambda = \ell \delta_0$ on $\PP^1_g(\mu)\cup \Delta_0$ for some constant $\ell$. Then the sum of Lyapunov exponents of a  
Teichm\"uller curve $C$ is given by $(\lambda \cdot C) / (\eta\cdot C) = \ell / e$ which is independent of $C$, contradicting that $\PP^1_g(\mu)$ is a varying stratum.  

Note that the above criterion is practically checkable for arithmetic Teichm\"uller curves generated by square-tiled surfaces by using the combinatorial description of area Siegel--Veech constants and sums of Lyapunov exponents (see~\cite[Section 2.5.1]{EKZ} and also~\cite[Theorem 1.8]{ChenSlope} for the relation with slopes of Teichm\"uller curves). 

The same conclusion would hold for a varying stratum of quadratic differentials, assuming that all Teichm\"uller curves contained in the stratum intersect only one irreducible boundary divisor (see \cite[Remark 4.7]{ChenMoeller2} for this assumption).  
\end{remark}

%%%%%%%%%%%%%%%%%
\begin{remark}
\label{remark:speculation}
%%%%%%%%%%%%%%%%%

We speculate that the rational Picard group (and the rational second cohomology group) of any varying stratum of holomorphic differentials $\PP^1_g(\mu)$ should be of rank one generated by $\eta$. 
\end{remark}

%%%%%%%%%%%%%%%%%
%%%%%%%%%%%%%%%%%
\section{Differentials of infinite area}
\label{sec:infinite}
%%%%%%%%%%%%%%%%%
%%%%%%%%%%%%%%%%%

In this section we prove Theorem~\ref{thm:infinite} for strata of $k$-differentials of infinite area. For completeness we also include the discussion for some strata in low genus.  

%%%%%%%%%%%%%%%%%
\subsection{Strata in genus zero}
%%%%%%%%%%%%%%%%%

Every stratum in genus zero is isomorphic to the corresponding moduli space of pointed smooth rational curves, which has a trivial Chow ring and is affine.  

%%%%%%%%%%%%%%%%%
\subsection{Strata in genus one}
\label{subsec:g1}
%%%%%%%%%%%%%%%%%

Since $\psi_i = 0$ on $\MM_{1,n}$ for all $i$ (see \cite[Theorem 2.2 c)]{ACPicard}), every stratum $\PP^k_1(\mu)$ in genus one (including the case of some $m_i = -k$ in $\mu$) has a trivial tautological ring and is affine by Proposition~\ref{prop:affine}. The affinity also follows from the fact that the ambient space $\MM_{1,n}$ is affine (see~\cite[Theorem 3.1]{ChenAffine}).  

%%%%%%%%%%%%%%%%%
\subsection{Strata in genus two}
\label{subsec:g2}
%%%%%%%%%%%%%%%%%

Since $\lambda$ is trivial on $\MM_2$, every stratum $\PP^k_2(\mu)$ in genus two with $m_i \neq -k$ for all $i$ has a trivial tautological ring and is affine by Proposition~\ref{prop:affine}.  

%%%%%%%%%%%%%%%%%
\subsection{Strata with poles of order $\leq - k$}
%%%%%%%%%%%%%%%%%

Suppose $m_1 \leq -k$. By the relations $\eta = (m_i + k)\psi_i$ for all $i$ and $(2g-2)\eta = k \kappa - \sum_{i=1}^n m_i \psi_i$, we obtain that 
\begin{eqnarray*}
k\Big(\kappa + \sum_{i=1}^n \psi_i\Big) & = & (2g-2+n) \eta \\
& = & (2g-2+n)(m_1+k) \psi_1. 
\end{eqnarray*}
It implies that $\kappa + \sum_{i=1}^n \psi_i + a \psi_1$ is trivial on $\PP^k_g(\mu)$ where $a = - (2g-2+n)(m_1+k) / k \geq 0$. Since $\kappa + \sum_{i=1}^n \psi_i + a \psi_1$ (as ample plus nef) remains ample on $\BM_{g,n}$, it follows that $\PP^k_g(\mu)$ is affine and Theorem~\ref{thm:infinite} is justified.  

%%%%%%%%%%%%%%%%%
%%%%%%%%%%%%%%%%%
\section{Extremal stratification of the Hodge bundle}
\label{sec:extremal}
%%%%%%%%%%%%%%%%%
%%%%%%%%%%%%%%%%%

In this section we prove Theorem~\ref{thm:extremal}. Recall the notation $\mu = (m_1, \ldots, m_n)$, $\mu' = (m_1+m_2, m_3, \ldots, m_n)$, and we work with unordered zeros.  The idea is to show that Teichm\"uller curves in $\PP_g^k\{\mu'\}$ has negative intersection numbers with the divisor class of $\BPP_g^k\{\mu'\}$ in $\BPP_g^k\{\mu\}$ for $k = 1, 2$. In order to verify that, we make use of the aforementioned fact that degenerate differentials in the boundary of a Teichm\"uller curve can have only simple polar nodes (after taking a local square root in the case of quadratic differentials). 

\begin{proof}[Proof of Theorem~\ref{thm:extremal}]
First consider the case of abelian differentials. The following relation of divisor classes holds on $\BPP^1_g\{\mu\}$: 
$$ 12 \lambda - D_h - \kappa_\mu \eta = (m_1+m_2+1) \Big(1 - \frac{1}{m_1+1} - \frac{1}{m_2+1} + \frac{1}{m_1+m_2+1}\Big) \BPP^1_g\{\mu'\}$$ 
modulo the other boundary divisors, where $D_h$ is the boundary divisor generically parameterizing stable differentials with simple polar nodes. This relation follows from~\cite[Proposition 6.3]{CCMKodaira}, where $\BPP^1_g\{\mu\}$ can be identified with the boundary divisor $D_\Gamma$ having $\Gamma_{\bot}$ as a rational vertex containing the two merged zeros and $\ell_\Gamma = m_1 + m_2 + 1$ therein. Note that the coefficient on the right of the above equality is positive for $m_1, m_2 \geq 1$. 

Let $C$ be the closure of a Teichm\"uller curve in $\PP^1_g\{\mu'\}$. Since $C$ does not intersect the part of the boundary of $\BPP^1_g\{\mu\}$ away from $D_h$ and $\BPP^1_g\{\mu'\}$, we obtain  
\begin{eqnarray*}
 \frac{(12 \lambda - D_h - \kappa_\mu \eta)\cdot C}{\eta \cdot C} & = & \kappa_{\mu'} - \kappa_{\mu} \\
 & = & -1 + \frac{1}{m_1+1} + \frac{1}{m_2+1} - \frac{1}{m_1+m_2+1}, 
\end{eqnarray*}
where $12\lambda - D_h = \kappa_{\mu'} \eta$ holds on the stratum $\BPP^1_g\{\mu'\}$ (modulo the other irrelevant boundary divisors of $\BPP^1_g\{\mu'\}$ which do not meet $C$). Consequently, this implies 
$$\frac{C\cdot \BPP^1_g\{\mu'\}}{C\cdot \eta} = - \frac{1}{m_1 + m_2 + 1} < 0. $$
Since $\eta$ has positive degree on every Teichm\"uller curve (e.g., from the positive sign of the area Siegel--Veech constant), it follows that 
$C\cdot \BPP^1_g\{\mu'\} < 0$ in $\BPP^1_g\{\mu\}$. Since Teichm\"uller curves are dense in $\BPP^1_g\{\mu'\}$ and the negative ratio above is independent of each individual Teichm\"uller curve, the desired claim follows by using the same argument as in \cite[Section 4]{Gheorghita}.  

The case of quadratic differentials is similar. The following relation of divisor classes holds on $\BPP^2_g\{\mu\}$: 
$$ 12 \lambda - D_h - \kappa_\mu \eta = (m_1+m_2+2)\Big(\frac{1}{2} - \frac{1}{m_1+2} - \frac{1}{m_2+2} + \frac{1}{m_1+m_2+2}\Big) \BPP^2_g\{\mu'\}$$ 
modulo the other boundary divisors, where the coefficient on the right is nonzero by the assumption that $m_1, m_2 \geq -1$ and at least one of them is positive. 
 Let $C$ be the closure of a Teichm\"uller curve in $\PP^2_g\{\mu'\}$. Since $C$ does not intersect the part of the boundary of $\BPP^2_g\{\mu\}$ away from $D_h$ and $\BPP^2_g\{\mu'\}$, we obtain  
\begin{eqnarray*}
 \frac{(12 \lambda - D_h - \kappa_\mu \eta)\cdot C}{\eta \cdot C}  
  = -\frac{1}{2} + \frac{1}{m_1+2} + \frac{1}{m_2+2} - \frac{1}{m_1+m_2+2}. 
\end{eqnarray*}
It follows that 
$$\frac{C\cdot \BPP^2_g\{\mu'\}}{C\cdot \eta} = - \frac{1}{m_1 + m_2 + 2} < 0. $$
The rest of the argument is the same as before. 
\end{proof}
  
 \begin{remark}
 \label{remark:extremal}
If a stratum in Theorem~\ref{thm:extremal} is disconnected, since each of its connected components contains a dense collection of Teichm\"uller curves, the conclusion of Theorem~\ref{thm:extremal} still holds for each connected component.  
\end{remark}
 
\begin{remark} 
For $k$-differentials with $k > 2$, since there is no meaningful way to define Teichm\"uller curves in this case, our method cannot be directly adapted. We leave it as an interesting question to determine whether the stratification of the $k$-th Hodge bundle is extremal for $k > 2$. 
 \end{remark} 
  
%%%%%%%%%%%%%%%%%%%

\end{document}